\documentclass[12pt]{amsart}

\usepackage{amssymb}

\usepackage{enumitem}

\usepackage{graphicx}

\makeatletter
\@namedef{subjclassname@2020}{%
  \textup{2020} Mathematics Subject Classification}
\makeatother

\usepackage[T1]{fontenc}

\newtheorem{theorem}{Theorem}[section]
\newtheorem{corollary}[theorem]{Corollary}
\newtheorem{lemma}[theorem]{Lemma}
\newtheorem{proposition}[theorem]{Proposition}

\theoremstyle{definition}

\numberwithin{equation}{section}

\frenchspacing

\textwidth=13.5cm
\textheight=23cm
\parindent=16pt
\oddsidemargin=-0.5cm
\evensidemargin=-0.5cm
\topmargin=-0.5cm


\newcommand{\Mat}{\mathrm{Mat}_2(\mathbb{F})}

\begin{document}

\baselineskip=17pt


\title[On $2$-dimensional invariant subspaces of matrices]{On $2$-dimensional invariant subspaces of matrices}

\author[O. Al-Raisi]{O. Al-Raisi}
\address{Department of  Mathematics,  College of Science, Sultan Qaboos University, Muscat, Oman}
\email{omartalibmiran@gmail.com}

\author[M. Shahryari]{M. Shahryari}
\address{Department of  Mathematics,  College of Science, Sultan Qaboos University, Muscat, Oman}
\email{m.ghalehlar@squ.edu.om}

\date{}

\begin{abstract}
We introduce a unified method for study of $2$-dimensional invariant subspaces of matrices and their corresponding {\em super-eigenvalues}. As a novel application to non-commutative algebra, we present a connection between the eigenvalues of  matrices with entries in the ring $\Mat$ and $2$-dimensional invariant subspaces of matrices with entries in the field $\mathbb{F}$.
\end{abstract}

\subjclass[2020]{Primary 15A18; Secondary 15B33}

\keywords{Matrices over non-commutative rings, Eigenvalues, Super-eigenvalues, Invariant subspaces, Characteristic polynomial}

\maketitle

\section{Introduction}
Existence of common $2$-dimensional invariant subspaces for various pairs of matrices has been studied in details; for example, in \cite{Ern}, the author investigates the common $2$-dimensional invariant subspaces of two rotations. Also in \cite{Ikram1} and \cite{Ikram2}, the problem of existence of a common $2$-dimensional (as well as higher dimensional) invariant subspaces of two matrices is taken into consideration. In this note, we introduce a unified method of the study of this concept as well as the notion of $2$-dimensional eigenvalues ({\em super-eigenvalues}) of matrices. A $4$-variable polynomial will be introduced which plays partially the role of the characteristic polynomial in the case of $2$-dimensional invariant subspaces. Then a necessary and sufficient condition for a $2\times 2$ matrix to be a super-eigenvalue of a matrix $A$ will be given in the case of files with zero or odd characteristics. We will see that there is a deep relation between these results and the problem of determining eigenvalues of linear maps on free modules over the non-commutative ring of $2\times 2$ matrices with entries in a field.

This connection can be described as follows:for a unital commutative ring $R$, the study of eigenvalues of matrices with entries in $R$ reduces to the problem of finding the roots of the corresponding characteristic polynomial. The same cannot be said in the study of eigenvalues of matrices with entries in a non-commutative ring. Indeed, there is no satisfactory definition of the characteristic polynomial of a matrix with entries in a non-commutative ring due to the lack of a proper definition of determinant in this case. We establish a connection between the problem of finding eigenvalues and eigenvectors of matrices over the ring $R=\Mat$, of all $2\times 2$ matrices over a field $\mathbb{F}$, and the problem of finding $2$-dimensional invariant subspaces of matrices with entries in $\mathbb{F}$, thereby gaining the ability to solve the former problem by solving the latter.

\section{Free modules over $R$}
Consider the $\mathbb{F}$-space $\mathcal{U}=\mathbb{F}^{2n}=\mathbb{F}^n\oplus \mathbb{F}^n$ and define an action of $R=\Mat$ on $\mathcal{U}$ by
$$
\left[\begin{array}{cc}
         p&q\\
         r&s
         \end{array}\right]\cdot \left[ \begin{array}{c} \mathbf{u}\\ \mathbf{v}\end{array}\right]=\left[ \begin{array}{c} p\mathbf{u}+q\mathbf{v}\\ r\mathbf{u}+s\mathbf{v}\end{array}\right].
$$

It can be easily verified that $\mathcal{U}$ is an $R$-module. Note that the ring $R$ has an IBN (invariant basis number), so the concept of the rank is well-defined for free $R$-modules.

\begin{lemma}
Suppose that $n=2k$ is an even number. Then $\mathcal{U}\cong_RR^k$ is the free $R$-module of rank $k$.
\end{lemma}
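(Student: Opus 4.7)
The plan is to exhibit an explicit $R$-basis for $\mathcal{U}$ of size $k$ by decomposing $\mathcal{U}$ into $k$ $R$-invariant summands, each of which turns out to be isomorphic to $R$ as an $R$-module.

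First, I will work purely at the level of the $\mathbb{F}$-vector space $\mathbb{F}^n$. Fixing the standard basis $e_1,\dots,e_n$, I will pair consecutive basis vectors and set $V_i=\mathbb{F}e_{2i-1}\oplus \mathbb{F}e_{2i}$ for $i=1,\dots,k$, so that $\mathbb{F}^n=V_1\oplus\cdots\oplus V_k$. This induces the decomposition
\[ \mathcal{U}=\mathbb{F}^n\oplus\mathbb{F}^n=\bigoplus_{i=1}^{k}(V_i\oplus V_i), \]
and by inspecting the defining formula of the action, each summand $V_i\oplus V_i$ is $R$-invariant: if $\mathbf{u},\mathbf{v}\in V_i$, then $p\mathbf{u}+q\mathbf{v}$ and $r\mathbf{u}+s\mathbf{v}$ also lie in $V_i$. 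Hence this is a decomposition of $R$-modules, and it suffices to prove $V_i\oplus V_i\cong_R R$ for each $i$.

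Next, as a candidate generator for the $i$-th summand, I will take $w_i=\left[\begin{array}{c} e_{2i-1} \\ e_{2i}\end{array}\right]\in V_i\oplus V_i$ and consider the map $\phi_i:R\to V_i\oplus V_i$ defined by $\phi_i(A)=A\cdot w_i$. Associativity of the $R$-action makes $\phi_i$ into an $R$-module homomorphism; evaluating on a general element,
\[ \phi_i\!\left(\left[\begin{array}{cc} p & q \\ r & s\end{array}\right]\right)=\left[\begin{array}{c} p\,e_{2i-1}+q\,e_{2i} \\ r\,e_{2i-1}+s\,e_{2i}\end{array}\right]. \]
The quantities $p,q,r,s$ are uniquely recovered as the coordinates of this image relative to the four $\mathbb{F}$-basis vectors $(e_{2i-1},0),(e_{2i},0),(0,e_{2i-1}),(0,e_{2i})$ of $V_i\oplus V_i$, so $\phi_i$ is bijective. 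Taking the direct sum of the $\phi_i$ gives the desired $R$-isomorphism $R^k\cong \mathcal{U}$.

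I do not anticipate a serious obstacle: the whole argument reduces to the one block-matrix calculation above. The only conceptual subtlety is that the splitting of $\mathbb{F}^n$ into the $V_i$ is carried out purely as an $\mathbb{F}$-vector space decomposition—$\mathbb{F}^n$ alone carries no natural $R$-action in this setup—and it is only after pairing $V_i$ with a second copy of itself inside $\mathcal{U}$ that the $R$-structure comes into play and each summand becomes free of rank $1$.
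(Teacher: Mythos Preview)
Your argument is correct and, at bottom, is the same as the paper's: exhibit an explicit $R$-basis of size $k$ for $\mathcal{U}$. The differences are cosmetic but worth noting. First, the paper pairs $e_i$ with $e_{k+i}$ (first half with second half) and takes $T_i=\left[\begin{smallmatrix} e_i\\ e_{k+i}\end{smallmatrix}\right]$, whereas you pair consecutive vectors and take $w_i=\left[\begin{smallmatrix} e_{2i-1}\\ e_{2i}\end{smallmatrix}\right]$; either pairing works equally well for this lemma, though the paper's specific choice $\{T_i\}$ is reused in the proof of the next lemma, so its indexing is not arbitrary. Second, the paper verifies spanning and independence of $\{T_i\}$ directly, while you package the same computation as an $R$-module decomposition $\mathcal{U}=\bigoplus_i(V_i\oplus V_i)$ together with explicit isomorphisms $\phi_i:R\to V_i\oplus V_i$; your formulation makes the ``free of rank $1$ on each block'' structure more visible, but the underlying linear-algebra check (that the four entries $p,q,r,s$ are recovered as coordinates) is identical to the paper's.
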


\begin{proof}
Suppose $\{ e_1, e_2, \ldots, e_n\}$ is the standard basis of the vector space $\mathbb{F}^n$. For every integer $i$ with $1\leq i\leq k$, we define
$$
T_i=\left[ \begin{array}{c} e_i\\ e_{k+i}\end{array}\right]
$$
and we show that $\mathbf{B}=\{ T_1, T_2, \ldots, T_k\}$ is an $R$-basis of $\mathcal{U}$. Suppose $\left[ \begin{array}{c} \mathbf{u}\\ \mathbf{v}\end{array}\right]\in \mathcal{U}$. We have
$$
\mathbf{u}=\sum_{i=1}^k(p_ie_i+q_ie_{k+i}),\ \mathbf{v}=\sum_{i=1}^k(r_ie_i+s_ie_{k+i}),
$$
for some scalars $p_i, q_i, r_i, s_i\in \mathbb{F}$. This means that
$$
\left[ \begin{array}{c} \mathbf{u}\\ \mathbf{v}\end{array}\right]=\sum_{i=1}^k\left[\begin{array}{cc}
         p_i&q_i\\
         r_i&s_i
         \end{array}\right]\cdot\left[ \begin{array}{c} e_i\\ e_{k+i}\end{array}\right]=\sum_{i=1}^k\left[\begin{array}{cc}
         p_i&q_i\\
         r_i&s_i
         \end{array}\right]\cdot T_i
$$
As a result, $\mathbf{B}$ generates $\mathcal{U}$. To show that $\mathbf{B}$ is linearly independent, let
$$
\sum_{i=1}^k\left[\begin{array}{cc}
         p_i&q_i\\
         r_i&s_i
         \end{array}\right]\cdot T_i=\mathbf{0}.
$$
Then, we have
$$
\sum_{i=1}^k(p_ie_i+q_ie_{k+i})=\mathbf{0},\ \sum_{i=1}^k(r_ie_i+s_ie_{k+i})=\mathbf{0},
$$
which implies that all $p_i, q_i, r_i$, and $s_i$ are zero.
\end{proof}

For an arbitrary matrix $A\in \mathrm{Mat}_n(\mathbb{F})$, define a map $f_A:\mathcal{U}\to \mathcal{U}$ by
$$
f_A(\left[ \begin{array}{c} \mathbf{u}\\ \mathbf{v}\end{array}\right])=\left[ \begin{array}{c} A\mathbf{u}\\ A\mathbf{v}\end{array}\right].
$$

\begin{lemma}
The map $f_A$ is an $R$-homomorphism. If $n=2k$ is an even number, then every $R$-homomorphism $f:\mathcal{U}\to \mathcal{U}$ has the form $f=f_A$, for some $A\in \mathrm{Mat}_n(\mathbb{F})$.
\end{lemma}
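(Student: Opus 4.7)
\medskip

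\noindent\textbf{Proof plan.}

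For the first assertion I would proceed by direct verification. Given
$M=\left[\begin{smallmatrix} p & q \\ r & s \end{smallmatrix}\right]\in R$ and
$\left[\begin{smallmatrix}\mathbf{u}\\ \mathbf{v}\end{smallmatrix}\right]\in\mathcal{U}$, apply $f_A$ to
$M\cdot\left[\begin{smallmatrix}\mathbf{u}\\ \mathbf{v}\end{smallmatrix}\right]=\left[\begin{smallmatrix} p\mathbf{u}+q\mathbf{v}\\ r\mathbf{u}+s\mathbf{v}\end{smallmatrix}\right]$, expand using $\mathbb{F}$-linearity of the matrix $A$ (the entries $p,q,r,s$ are scalars in $\mathbb{F}$, hence commute with the action of $A$), and recognize the result as $M\cdot f_A\!\left[\begin{smallmatrix}\mathbf{u}\\ \mathbf{v}\end{smallmatrix}\right]$. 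Additivity of $f_A$ is immediate from additivity of $A$.

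For the converse, the strategy is to use the basis $\mathbf{B}=\{T_1,\dots,T_k\}$ furnished by the previous lemma. Since $\mathcal{U}$ is free of rank $k$ over $R$, an $R$-homomorphism $f:\mathcal{U}\to\mathcal{U}$ is determined by the $k$ images
$$
f(T_i)=\left[\begin{array}{c}\mathbf{u}_i\\ \mathbf{v}_i\end{array}\right],\qquad i=1,\dots,k,
$$
with $\mathbf{u}_i,\mathbf{v}_i\in\mathbb{F}^n$. The key observation is that the $2k=n$ vectors $e_1,\dots,e_k,e_{k+1},\dots,e_{2k}$ used to build the $T_i$ are precisely the standard basis of $\mathbb{F}^n$. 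So I would define the candidate matrix $A\in\mathrm{Mat}_n(\mathbb{F})$ to be the unique $\mathbb{F}$-linear endomorphism of $\mathbb{F}^n$ determined by
$$
Ae_i=\mathbf{u}_i,\qquad Ae_{k+i}=\mathbf{v}_i\qquad (1\leq i\leq k).
$$

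With this $A$ in hand, a direct check gives $f_A(T_i)=\left[\begin{smallmatrix}Ae_i\\ Ae_{k+i}\end{smallmatrix}\right]=\left[\begin{smallmatrix}\mathbf{u}_i\\ \mathbf{v}_i\end{smallmatrix}\right]=f(T_i)$. Since both $f$ and $f_A$ are $R$-linear and they agree on the $R$-basis $\mathbf{B}$, they coincide on all of $\mathcal{U}$. I do not foresee a genuine obstacle here; the only point that requires the hypothesis ``$n$ even'' is the use of the previous lemma to have a free basis and hence the freedom to prescribe images independently. Everything else is bookkeeping with the block-matrix action.
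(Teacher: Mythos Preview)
Your argument is correct. Both parts are handled properly: the $R$-linearity of $f_A$ reduces to the fact that scalar entries of $R$ commute with $A$, and for the converse you legitimately exploit freeness of $\mathcal{U}$ over $R$ to define $A$ on the standard basis of $\mathbb{F}^n$ so that $f_A$ matches $f$ on $\mathbf{B}$.

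The paper proceeds somewhat differently. Rather than defining $A$ abstractly and invoking the universal property of a free basis, it introduces an auxiliary $\mathbb{F}$-basis $\mathbf{X}=\{E_1,\dots,E_n,F_1,\dots,F_n\}$ of $\mathcal{U}$ (where $E_j=\left[\begin{smallmatrix}e_j\\ \mathbf{0}\end{smallmatrix}\right]$ and $F_j=\left[\begin{smallmatrix}\mathbf{0}\\ e_j\end{smallmatrix}\right]$), expresses each $E_j,F_j$ as a matrix-unit multiple of some $T_i$, and then explicitly computes the $\mathbb{F}$-matrix $[f]_{\mathbf{X}}$ from the $R$-matrix $[f]_{\mathbf{B}}$. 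The outcome is that $[f]_{\mathbf{X}}$ is block-diagonal of the form $\left[\begin{smallmatrix}A&0\\0&A\end{smallmatrix}\right]$, which is exactly the statement $f=f_A$. Your route is shorter and more conceptual; the paper's route is heavier computationally but has the side benefit of producing the explicit entrywise dictionary between $[f]_{\mathbf{B}}\in\mathrm{Mat}_k(R)$ and $A\in\mathrm{Mat}_n(\mathbb{F})$, which is the matrix $\tilde{A}$ used in the sequel.
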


\begin{proof}
That $f_A$ is an $R$-homomorphism can be verified directly. Now, let $f:\mathcal{U}\to \mathcal{U}$ be an arbitrary $R$-homomorphism. For the sake of simplicity, we consider the case $n=4$. To this end, let $e_1, e_2, e_3, e_4$ be the standard basis of $\mathbb{F}^4$. As we just saw, an $R$-basis of $\mathcal{U}$ is $\mathbf{B}=\{ T_1, T_2\}$, where
$$
T_1=\left[ \begin{array}{c} e_1\\ e_3\end{array}\right],\ T_2=\left[ \begin{array}{c} e_2\\ e_4\end{array}\right].
$$
Let $\mathbf{X}$ be the set consisting of the vectors
\begin{small}
$$
E_1=\left[ \begin{array}{c} e_1\\ \mathbf{0}\end{array}\right], E_2=\left[ \begin{array}{c} e_2\\ \mathbf{0}\end{array}\right], E_3=\left[ \begin{array}{c} e_3\\ \mathbf{0}\end{array}\right], E_4=\left[ \begin{array}{c} e_4\\ \mathbf{0}\end{array}\right],
$$
\end{small}
and
\begin{small}
$$
F_1=\left[ \begin{array}{c} \mathbf{0}\\ e_1\end{array}\right], F_2=\left[ \begin{array}{c} \mathbf{0}\\ e_2\end{array}\right], F_3=\left[ \begin{array}{c} \mathbf{0}\\ e_3\end{array}\right], F_4=\left[ \begin{array}{c} \mathbf{0}\\ e_4\end{array}\right].
$$
\end{small}
Then $\mathbf{X}$ is an $\mathbb{F}$-basis of $\mathcal{U}$. Furthermore, we have the following obvious relations
$$
T_1=E_1+F_3, \  T_2=E_2+F_4,
$$
as well as the relations
\begin{small}
$$
E_1=\left[\begin{array}{cc}
         1&0\\
         0&0
         \end{array}\right]\cdot T_1, \ E_2=\left[\begin{array}{cc}
         1&0\\
         0&0
         \end{array}\right]\cdot T_2,\ E_3=\left[\begin{array}{cc}
         0&1\\
         0&0
         \end{array}\right]\cdot T_1,\ E_4=\left[\begin{array}{cc}
         0&1\\
         0&0
         \end{array}\right]\cdot T_2,
$$
\end{small}
and
\begin{small}
$$
F_1=\left[\begin{array}{cc}
         0&0\\
         1&0
         \end{array}\right]\cdot T_1, \ F_2=\left[\begin{array}{cc}
         0&0\\
         1&0
         \end{array}\right]\cdot T_2,\ F_3=\left[\begin{array}{cc}
         0&0\\
         0&1
         \end{array}\right]\cdot T_1,\ F_4=\left[\begin{array}{cc}
         0&0\\
         0&1
         \end{array}\right]\cdot T_2.
$$
\end{small}
Now, suppose $[f]_{\mathbf{B}}$, the matrix representation of $f$ with respect to the $R$-basis $\mathbf{B}$, is
$$
\left[\begin{array}{cc}
         \left[\begin{array}{cc}
         p_{1,1}&q_{1,1}\\
         r_{1,1}&s_{1,1}
         \end{array}\right]&\left[\begin{array}{cc}
         p_{1,2}&q_{1,2}\\
         r_{1,2}&s_{1,2}
         \end{array}\right]\\
         \ &\ \\
         \left[\begin{array}{cc}
         p_{2, 1}&q_{2, 1}\\
         r_{2, 1}&s_{2, 1}
         \end{array}\right]&\left[\begin{array}{cc}
         p_{2, 2}&q_{2, 2}\\
         r_{2, 2}&s_{2, 2}
         \end{array}\right]
         \end{array}\right].
$$
Then, we have
\begin{eqnarray*}
f(E_1)&=&\left[\begin{array}{cc}
         1&0\\
         0&0
         \end{array}\right]\cdot f(T_1)\\
      &=&  \left[\begin{array}{cc}
         1&0\\
         0&0
         \end{array}\right]\cdot ( \left[\begin{array}{cc}
         p_{1,1}&q_{1,1}\\
         r_{1,1}&s_{1,1}
         \end{array}\right]\cdot T_1+\left[\begin{array}{cc}
         p_{2, 1}&q_{2, 1}\\
         r_{2, 1}&s_{2, 1}
         \end{array}\right]\cdot T_2)\\
      &=&   \left[\begin{array}{cc}
         1&0\\
         0&0
         \end{array}\right]\cdot(  \left[\begin{array}{c}
         p_{1,1}e_1+q_{1,1}e_3\\
         r_{1,1}e_1+s_{1,1}e_3
         \end{array}\right]+  \left[\begin{array}{c}
         p_{2, 1}e_2+q_{2, 1}e_4\\
         r_{2, 1}e_2+s_{2, 1}e_4
         \end{array}\right]) \\
       &=& \left[\begin{array}{c}
         p_{1,1}e_1+p_{2, 1}e_2+q_{1,1}e_3+q_{2, 1}e_4\\
                  \mathbf{0}
         \end{array}\right]\\
       &=&p_{1,1}E_1+p_{2, 1}E_2+q_{1,1}E_3+q_{2, 1}E_4.
\end{eqnarray*}
Consequently, the first column of $[f]_{\mathbf{X}}$ is $[p_{1,1}\ p_{2, 1}\ q_{1,1}\ q_{2, 1}\ 0\ 0\ 0\ 0]^t$. Similarly, the second column is
$[p_{1,2}\ p_{2, 2}\ q_{1,2}\ q_{2, 2}\ 0\ 0\ 0\ 0]^t$. As a result, the matrix representation of $f$ as an $\mathbb{F}$-map has the form
$$
\left[ \begin{array}{cccccccc}
p_{1,1}&p_{1,2}&r_{1,1}&r_{1,2}&0&0&0&0\\
p_{2, 1}&p_{2, 2}&r_{2, 1}&r_{2, 2}&0&0&0&0\\
q_{1,1}&q_{1,2}&s_{1,1}&s_{1,2}&0&0&0&0\\
q_{2, 1}&q_{2, 2}&s_{2, 1}&s_{2, 2}&0&0&0&0\\
0&0&0&0&p_{1,1}&p_{1,2}&r_{1,1}&r_{1,2}\\
0&0&0&0&p_{2, 1}&p_{2, 2}&r_{2, 1}&r_{2, 2}\\
0&0&0&0&q_{1,1}&q_{1,2}&s_{1,1}&s_{1,2}\\
0&0&0&0&q_{2, 1}&q_{2, 2}&s_{2, 1}&s_{2, 2} \end{array}\right].
$$
Hence, if we consider the matrix
$$
A=\left[ \begin{array}{cccc}
p_{1,1}&p_{1,2}&r_{1,1}&r_{1,2}\\
p_{2, 1}&p_{2, 2}&r_{2, 1}&r_{2, 2}\\
q_{1,1}&q_{1,2}&s_{1,1}&s_{1,2}\\
q_{2, 1}&q_{2, 2}&s_{2, 1}&s_{2, 2} \end{array}\right],
$$
then $f=f_A$.
\end{proof}

It is easy to verify that, for a given matrix $A\in\mathrm{Mat}_n(\mathbb{F})$, we also have
$$
[f_A]_{\mathbf{X}}=\left[ \begin{array}{ccc}
\left[ \begin{array}{ll}a_{1,1}& a_{k+1, 1}\\
   a_{1, k+1}& a_{k+1, k+1}\end{array}\right]&\cdots&\left[ \begin{array}{ll}a_{1, k}& a_{k+1, k}\\
   a_{1, n}& a_{k+1, n}\end{array}\right]\\
   \vdots&\vdots  &\vdots\\
 \left[ \begin{array}{ll}a_{k, 1}& a_{n, 1}\\
   a_{k, k+1}& a_{n, k+1}\end{array}\right]&\cdots&\left[ \begin{array}{ll}a_{k, k}& a_{n, k}\\
   a_{k, n}& a_{n, n}\end{array}\right]  \end{array} \right].
$$
This matrix will henceforth be denoted by $\tilde{A}$. In the next section, the natural identification of the rings $\alpha:\mathrm{Mat}_k(R)\to \mathrm{Mat}_n(\mathbb{F})$ will be exploited. This identification removes the inner brackets in any $k\times k$ matrix
$$
M=\left[ \begin{array}{ccc}\left[\begin{array}{cc} a_{1, 1}^{(1, 1)}&a_{1, 2}^{(1,1)}\\
                               a_{2, 1}^{(1, 1)}&a_{2, 2}^{(1, 1)}\end{array}\right]&\cdots&\left[\begin{array}{cc} a_{1, 1}^{(1, k)}&a_{1, 2}^{(1,k)}\\
                               a_{2, 1}^{(1, k)}&a_{2, 2}^{(1, k)}\end{array}\right]\\
                               \vdots&\cdots&\vdots\\
                               \left[\begin{array}{cc} a_{1, 1}^{(k, 1)}&a_{1, 2}^{(k,1)}\\
                               a_{2, 1}^{(k, 1)}&a_{2, 2}^{(k, 1)}\end{array}\right]&\cdots&\left[\begin{array}{cc} a_{1, 1}^{(k, k)}&a_{1, 2}^{(k,k)}\\
                               a_{2, 1}^{(k, k)}&a_{2, 2}^{(k, k)}\end{array}\right]\end{array}
                               \right]
$$
and returns the $n\times n$ matrix
$$
\alpha(M)=\left[ \begin{array}{ccccc}
a_{1,1}^{(1, 1)}&a_{1,2}^{(1, 1)}&\cdots&a_{1,1}^{(1, k)}&a_{1,2}^{(1, k)}\\
a_{2,1}^{(1, 1)}&a_{2,2}^{(1, 1)}&\cdots&a_{2,1}^{(1, k)}&a_{2,2}^{(1, k)}\\
\vdots &\vdots &\cdots&\vdots&\vdots\\
a_{1,1}^{(k, 1)}&a_{1,2}^{(k, 1)}&\cdots&a_{1,1}^{(k, k)}&a_{1,2}^{(k, k)}\\
a_{2,1}^{(k, 1)}&a_{2,2}^{(k, 1)}&\cdots&a_{2,1}^{(k, k)}&a_{2,2}^{(k, k)}\end{array}\right].
$$
Consequently, for a matrix $A\in \mathrm{Mat}_n(\mathbb{F})$, we will denote the corresponding matrix $\alpha(\tilde{A})$ by $\hat{A}$.

\section{Super-eigenvalues}
For a matrix $A\in \mathrm{Mat}_n(\mathbb{F})$, the ordinary eigenvectors determine 1-dimensional invariant subspaces of $A$. We generalize this idea by defining a {\em super-eigenvector} of $A$ to be a $2$-dimensional invariant subspace of $A$. A super-eigenvector $W$ is termed {\em proper} if it does not contain any eigenvector of $A$. Let $W=\langle \mathbf{u}, \mathbf{v}\rangle$ be a super-eigenvector of $A$. Then
$$
A\mathbf{u}=p\mathbf{u}+q\mathbf{v}, \ A\mathbf{v}=r\mathbf{u}+s\mathbf{v}
$$
for some scalars $p, q, r$, and $s$. We call the matrix $\Lambda=\left[ \begin{array}{cc} p&q\\ r&s\end{array}\right]$ a {\em super-eigenvalue} of $A$ corresponding to $W$. It is easy to see that if $W=\langle \mathbf{u}^{\prime}, \mathbf{v}^{\prime}\rangle$ and
$$
A\mathbf{u}^{\prime}=p^{\prime}\mathbf{u}^{\prime}+q^{\prime}\mathbf{v}^{\prime}, \ A\mathbf{v}^{\prime}=r^{\prime}\mathbf{u}^{\prime}+s^{\prime}\mathbf{v}^{\prime},
$$
then the matrices
$$
\Lambda=\left[ \begin{array}{cc} p&q\\ r&s\end{array}\right], \Lambda^{\prime}=\left[ \begin{array}{cc} p^{\prime}&q^{\prime}\\ r^{\prime}&s^{\prime}\end{array}\right]
$$
are similar. Hence, every super-eigenvector of $A$ corresponds to a similarity class of $2\times 2$ matrices. A super-eigenvalue $\Lambda=\left[ \begin{array}{cc} p&q\\ r&s\end{array}\right]$ is called {\em proper} if and only if the corresponding super-eigenvector is proper. One can see that if $\Lambda$ is proper, then every super-eigenvector corresponding to $\Lambda$ is proper.

Now, let $n=2k$ be an even number. As we saw in the previous section, every $R$-homomorphism $f:\mathcal{U}\to \mathcal{U}$ has the form $f_A$, for some $A\in \mathrm{Mat}_n(\mathbb{F})$. In other words, every $n\times n$ matrix $A$ with entries in the field $\mathbb{F}$ corresponds to a unique $k\times k$ matrix $\tilde{A}$ with entries in $R$. Hence, we obtain the following at once.

\begin{proposition}
A matrix $\Lambda\in R$ is a super-eigenvalue of $A$ if and only if $\Lambda$ is an eigenvalue of $\tilde{A}$.
\end{proposition}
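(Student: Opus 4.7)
The plan is to translate the eigenvalue equation for $\tilde{A}$ directly into the pair of vector equations in $\mathbb{F}^{n}$ that define a super-eigenvalue of $A$, and then read off both directions of the equivalence from a single calculation.

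Since $\tilde{A}$ is the matrix representing the $R$-homomorphism $f_A$ on $\mathcal{U}\cong_{R}R^{k}$ in the basis $\mathbf{B}$, the assertion ``$\Lambda$ is an eigenvalue of $\tilde{A}$'' is equivalent to the existence of a nonzero $\mathbf{w}\in\mathcal{U}$ satisfying $f_A(\mathbf{w})=\Lambda\cdot\mathbf{w}$. Writing $\mathbf{w}=\left[\begin{array}{c}\mathbf{u}\\ \mathbf{v}\end{array}\right]$ and $\Lambda=\left[\begin{array}{cc}p&q\\ r&s\end{array}\right]$, and unwinding the definition of $f_A$ together with the action of $R$ on $\mathcal{U}$, this single module identity collapses to
$$
A\mathbf{u}=p\mathbf{u}+q\mathbf{v},\qquad A\mathbf{v}=r\mathbf{u}+s\mathbf{v},
$$
which are exactly the relations defining $\Lambda$ as a super-eigenvalue corresponding to the subspace $W=\langle\mathbf{u},\mathbf{v}\rangle$. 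Both directions then follow at once: from a super-eigenvector $W=\langle\mathbf{u},\mathbf{v}\rangle$ with action matrix $\Lambda$, one forms the vector $\mathbf{w}=\left[\begin{array}{c}\mathbf{u}\\ \mathbf{v}\end{array}\right]$, which is nonzero because $\mathbf{u}$ and $\mathbf{v}$ are linearly independent and which satisfies $f_A(\mathbf{w})=\Lambda\cdot\mathbf{w}$; conversely, from an eigenvector $\mathbf{w}$ of $\tilde{A}$ one extracts vectors $\mathbf{u},\mathbf{v}\in\mathbb{F}^{n}$ satisfying the same relations, so that $W=\langle\mathbf{u},\mathbf{v}\rangle$ is an $A$-invariant subspace on which $A$ acts by the matrix $\Lambda$.

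The one point requiring care, and the main obstacle in the argument, is the $2$-dimensionality of $W$ in the reverse direction: a merely nonzero $\mathbf{w}\in\mathcal{U}$ does not by itself force $\mathbf{u}$ and $\mathbf{v}$ to be linearly independent in $\mathbb{F}^{n}$, since one of them could vanish or they could be proportional. The natural remedy is to interpret ``eigenvalue of $\tilde{A}$'' as requiring an eigenvector $\mathbf{w}$ that generates a free rank-$1$ $R$-submodule of $\mathcal{U}$; a short annihilator computation shows that this condition is equivalent to the linear independence of $\mathbf{u}$ and $\mathbf{v}$ in $\mathbb{F}^{n}$, after which the reverse implication goes through without modification and the proposition becomes a direct consequence of the construction of $\tilde{A}$ in the previous section.
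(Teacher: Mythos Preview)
The paper gives no proof here, merely asserting that the proposition follows ``at once'' from the identification of $R$-endomorphisms of $\mathcal{U}$ with matrices $A$; your unpacking of $f_A(\mathbf{w})=\Lambda\cdot\mathbf{w}$ into the two defining relations is precisely that implicit argument, so the approaches coincide.

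Your caveat about the $2$-dimensionality of $W$ in the reverse direction is well taken and in fact goes beyond the paper's treatment. A nonzero $\mathbf{w}=\left[\begin{array}{c}\mathbf{u}\\ \mathbf{v}\end{array}\right]$ need not yield independent $\mathbf{u},\mathbf{v}$: if $A\mathbf{u}=p\mathbf{u}$ and $s$ is arbitrary, then $\mathbf{w}=\left[\begin{array}{c}\mathbf{u}\\ \mathbf{0}\end{array}\right]$ satisfies $f_A(\mathbf{w})=\left[\begin{array}{cc}p&0\\0&s\end{array}\right]\cdot\mathbf{w}$, yet the paper itself remarks (just after the next proposition) that $\left[\begin{array}{cc}p&0\\0&s\end{array}\right]$ is not a super-eigenvalue when $s$ is not an eigenvalue of $A$. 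Your remedy of reading ``eigenvector of $\tilde{A}$'' as a free generator of a rank-$1$ $R$-submodule---equivalently, requiring $\mathbf{u},\mathbf{v}$ to be $\mathbb{F}$-independent---is the natural way to make the biconditional honest; the paper simply does not address this point.
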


As a result, we have the following.

\begin{corollary}
A matrix $\Lambda=\left[ \begin{array}{cc} p&q\\ r&s\end{array}\right]$ is a super-eigenvalue of $A$ if and only if
$$
\det(\hat{A}-\Lambda\cdot I_k)=0,
$$
where by $\Lambda\cdot I_k$ we mean the matrix
$$
\left[ \begin{array}{ccccccc}
p&q&0&0&\cdots&0&0\\
r&s&0&0&\cdots&0&0\\
0&0&p&q&\cdots&0&0\\
0&0&r&s&\cdots&0&0\\
\vdots&\vdots&\vdots&\vdots&\cdots&\vdots&\vdots\\
0&0&0&0&\cdots&p&q\\
0&0&0&0&\cdots&r&s \end{array}\right].
$$
\end{corollary}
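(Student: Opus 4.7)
The plan is to deduce the corollary from the preceding proposition by passing from the non-commutative matrix ring $\mathrm{Mat}_k(R)$ to the field of coefficients via the ring isomorphism $\alpha$.

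The first step is to rephrase what it means for $\Lambda$ to be an eigenvalue of $\tilde{A}\in\mathrm{Mat}_k(R)$ in the language of invertibility. Unwinding the definition, this amounts to the existence of a nonzero $\mathbf{c}\in R^k$ with $(\tilde{A}-\Lambda I_k)\mathbf{c}=\mathbf{0}$, where $\Lambda I_k$ denotes the diagonal matrix over $R$ with $\Lambda$ on the diagonal. Inspecting how $\alpha$ erases the inner brackets, one checks at once that $\alpha(\Lambda I_k)$ is precisely the $n\times n$ block-diagonal matrix displayed in the statement, while $\alpha(\tilde{A})=\hat{A}$ by definition; hence $\alpha(\tilde{A}-\Lambda I_k)=\hat{A}-\Lambda\cdot I_k$.

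Because $\alpha:\mathrm{Mat}_k(R)\to\mathrm{Mat}_n(\mathbb{F})$ is a ring isomorphism, it preserves invertibility. Consequently, $\tilde{A}-\Lambda I_k$ is invertible in $\mathrm{Mat}_k(R)$ exactly when $\hat{A}-\Lambda\cdot I_k$ is invertible in $\mathrm{Mat}_n(\mathbb{F})$, i.e.\ exactly when $\det(\hat{A}-\Lambda\cdot I_k)\neq 0$. The remaining task is then to identify non-invertibility of $\tilde{A}-\Lambda I_k$ in $\mathrm{Mat}_k(R)$ with the existence of a nontrivial right null-vector in $R^k$.

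This last equivalence is, I expect, the only real obstacle, since the non-commutativity of $R$ forbids a direct Cramer-style argument inside $\mathrm{Mat}_k(R)$. I would handle it by invoking that $R=\Mat$ is Artinian, so $R^k$ has finite length as an $R$-module; any endomorphism of a finite-length module is injective iff surjective iff bijective, applied here to left multiplication by $\tilde{A}-\Lambda I_k$ on $R^k$. Chaining the three equivalences---the proposition, invertibility transport via $\alpha$, and the Fitting-type passage between non-invertibility and the existence of a null-vector---then delivers the corollary.
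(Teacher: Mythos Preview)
Your argument is correct and fills in details the paper omits entirely: the paper records the corollary with only the phrase ``As a result'' after Proposition~3.1 and gives no further justification. The chain you describe---pass from the proposition to a null-vector statement in $R^k$, transport through $\alpha$, and read off the determinant condition---is exactly the intended route.

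One remark on economy: invoking the Artinian property of $R$ and finite length of $R^k$ is heavier machinery than the situation requires. Since $R^k$ is a finite-dimensional $\mathbb{F}$-vector space and left multiplication by $\tilde{A}-\Lambda I_k$ is $\mathbb{F}$-linear, ordinary rank--nullity already gives ``injective iff bijective.'' Even more directly, one can extend $\alpha$ to column vectors (a $k\times 1$ matrix over $R$ becomes an $n\times 2$ matrix over $\mathbb{F}$): a nonzero $\mathbf{c}\in R^k$ with $(\tilde{A}-\Lambda I_k)\mathbf{c}=\mathbf{0}$ corresponds to a nonzero $n\times 2$ matrix annihilated on the left by $\hat{A}-\Lambda\cdot I_k$, hence to a nonzero vector in $\ker(\hat{A}-\Lambda\cdot I_k)$, hence to $\det(\hat{A}-\Lambda\cdot I_k)=0$. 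This bypasses the invertibility detour altogether and is likely what the authors had in mind when they wrote ``As a result.''
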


Note that the above statement is valid only in the case when $n$ is even. For the general case, we have the following.

\begin{proposition}
Let $\Lambda=\left[ \begin{array}{cc} p&q\\ r&s\end{array}\right]$ be a super-eigenvalue of $A$. Then we have
$$
\det((A-pI_n)(A-sI_n)-qrI_n)=0.
$$
\end{proposition}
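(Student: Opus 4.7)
The plan is to avoid invoking Proposition 3.1 and the associated $\tilde{A}$ construction (which required $n$ to be even), and instead give a short direct verification that works for any $n$: I will exhibit an explicit nonzero vector in the kernel of the matrix $M := (A-pI_n)(A-sI_n) - qrI_n$.

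First I would unpack the hypothesis. Saying that $\Lambda$ is a super-eigenvalue means there exist linearly independent $\mathbf{u},\mathbf{v}\in\mathbb{F}^n$ spanning an $A$-invariant plane $W$ with
\[
A\mathbf{u} = p\mathbf{u} + q\mathbf{v}, \qquad A\mathbf{v} = r\mathbf{u} + s\mathbf{v}.
\]
Rewriting these relations gives $(A-pI_n)\mathbf{u} = q\mathbf{v}$ and $(A-sI_n)\mathbf{v} = r\mathbf{u}$.

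Next I would combine them: apply $A-sI_n$ to the first identity and substitute the second to obtain $(A-sI_n)(A-pI_n)\mathbf{u} = q(A-sI_n)\mathbf{v} = qr\mathbf{u}$. Because $A-pI_n$ and $A-sI_n$ are both polynomials in $A$ they commute, so this rearranges to $M\mathbf{u} = 0$. Since $\mathbf{u},\mathbf{v}$ are linearly independent we have $\mathbf{u}\neq\mathbf{0}$, hence $M$ has a nontrivial kernel and $\det M = 0$, which is exactly the claim.

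There is no real obstacle: the only subtlety is the commutation of $A-pI_n$ and $A-sI_n$, which is immediate. One could equally well apply $A-pI_n$ to the second relation and use $\mathbf{v}\neq\mathbf{0}$ to reach the same conclusion, which is a useful sanity check that the two bracket factors in the statement of the proposition are interchangeable.
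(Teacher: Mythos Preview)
Your proof is correct and follows essentially the same route as the paper: both pick a super-eigenvector $W=\langle\mathbf{u},\mathbf{v}\rangle$, rewrite the defining relations as $(A-pI_n)\mathbf{u}=q\mathbf{v}$ and $(A-sI_n)\mathbf{v}=r\mathbf{u}$, compose to get $(A-sI_n)(A-pI_n)\mathbf{u}=qr\mathbf{u}$, and conclude that $\mathbf{u}$ lies in the kernel of $p_\Lambda(A)$. Your version is slightly more explicit in noting the commutation of $A-pI_n$ and $A-sI_n$, which the paper uses silently when swapping the order of the factors.
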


\begin{proof}
Let $W=\langle \mathbf{u}, \mathbf{v}\rangle$ be a super-eigenvector corresponding to $\Lambda$ so that
$$
A\mathbf{u}=p\mathbf{u}+q\mathbf{v}, \ A\mathbf{v}=r\mathbf{u}+s\mathbf{v}.
$$
Hence $(A-pI_n)\mathbf{u}=q\mathbf{v}$, and therefore,
$$
(A-sI_n)(A-pI_n)\mathbf{u}=q(A-sI_n)\mathbf{u}=qr\mathbf{u}.
$$
So, we have
$$
((A-pI_n)(A-sI_n)-qrI_n)\mathbf{u}=\mathbf{0};
$$
since $\mathbf{u}\neq \mathbf{0}$, we obtain
$$
\det((A-pI_n)(A-sI_n)-qrI_n)=0.
$$
\end{proof}

Note that the converse of this proposition is not true: Let $p$ be an eigenvalue of $A$ and suppose that $s$ is not an eigenvalue of $A$. Then obviously $(p, 0, 0, s)$ satisfies the equation $\det((A-pI_n)(A-sI_n)-qrI_n)=0$ while the matrix $\Lambda=\left[ \begin{array}{cc} p&0\\ 0&s\end{array}\right]$ is not a super-eigenvalue of $A$.

In what follows, $p_{\Lambda}(T)$ denotes the characteristic polynomial of $\Lambda$; that is,
$$p_{\Lambda}(T)=(T-p)(T-s)-qr=T^2-\mathrm{tr}(\Lambda)T+\det(\Lambda).
$$

With this notation, the above proposition says that if $\Lambda$ is a super-eigenvalue of $A$ then $\det(p_{\Lambda}(A))=0$.

\begin{theorem}
Suppose that the characteristic of the field $\mathbb{F}$ is not $2$. A matrix $\Lambda=\left[ \begin{array}{cc} p&q\\ r&s\end{array}\right]$ is a proper super-eigenvalue of $A$ if and only if $\det(p_{\Lambda}(A))=0$ and $p_{\Lambda}(T)$ is irreducible.
\end{theorem}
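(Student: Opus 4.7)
The plan is to split along the biconditional. The forward direction is almost immediate: Proposition 3.3 already gives $\det(p_{\Lambda}(A))=0$, so only irreducibility of $p_{\Lambda}(T)$ has to be established. Let $W=\langle \mathbf{u},\mathbf{v}\rangle$ be a proper super-eigenvector corresponding to $\Lambda$. In the ordered basis $\{\mathbf{u},\mathbf{v}\}$ of $W$ the restriction $A|_W$ has matrix $\Lambda^{t}$, whose characteristic polynomial is $p_{\Lambda}(T)$. If $p_{\Lambda}$ factored over $\mathbb{F}$, then $A|_W$ would possess an eigenvalue in $\mathbb{F}$ (even in the repeated-root case, nilpotency of $A|_W-\lambda I$ forces a genuine eigenvector), producing an eigenvector of $A$ inside $W$ and contradicting properness.

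For the reverse direction I plan to reconstruct $W$ directly from a vector in $\ker p_{\Lambda}(A)$. First note that irreducibility of $p_{\Lambda}(T)=(T-p)(T-s)-qr$ forces $q\neq 0$, since otherwise $p_{\Lambda}(T)$ factors as $(T-p)(T-s)$. Pick any nonzero $\mathbf{u}\in\ker p_{\Lambda}(A)$ (which exists because $\det p_{\Lambda}(A)=0$) and set
\[
\mathbf{v}=q^{-1}(A-pI_n)\mathbf{u}.
\]
The relation $A\mathbf{u}=p\mathbf{u}+q\mathbf{v}$ is then automatic. For the second relation, I use that $(A-pI_n)$ and $(A-sI_n)$ commute, so $p_{\Lambda}(A)\mathbf{u}=0$ can be read as $(A-sI_n)(A-pI_n)\mathbf{u}=qr\mathbf{u}$; dividing by $q$ this becomes exactly $A\mathbf{v}=r\mathbf{u}+s\mathbf{v}$.

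It remains to show that $\mathbf{u},\mathbf{v}$ are independent and that $W=\langle \mathbf{u},\mathbf{v}\rangle$ is proper; this is where the irreducibility hypothesis pays off a second time. If $\mathbf{v}=\lambda \mathbf{u}$ for some $\lambda\in\mathbb{F}$, then $A\mathbf{u}=(p+q\lambda)\mathbf{u}$, so that $\mathbf{u}$ is an $A$-eigenvector and $p_{\Lambda}(p+q\lambda)=0$ — impossible since $p_{\Lambda}$ has no root in $\mathbb{F}$. Hence $\dim W=2$ and, by construction, $W$ is $A$-invariant with super-eigenvalue precisely $\Lambda$. The same argument, applied to any hypothetical $A$-eigenvector $\mathbf{w}\in W$, would yield an $\mathbb{F}$-eigenvalue of $A|_W$ and hence an $\mathbb{F}$-root of $p_{\Lambda}$, so $W$ is proper.

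The main technical step, and the only place that requires a little care, is the rearrangement in the middle paragraph: one must recognise $p_{\Lambda}(A)\mathbf{u}=0$ as the single equation that simultaneously encodes commutativity of $(A-pI_n),(A-sI_n)$ and the required action $A\mathbf{v}=r\mathbf{u}+s\mathbf{v}$. Once that identification is made, both directions are essentially bookkeeping around the characteristic polynomial of $A|_W$. The hypothesis $\mathrm{char}\,\mathbb{F}\neq 2$ does not appear to be forced by this construction, but presumably enters the authors' proof through a discriminant or completing-the-square argument on $p_{\Lambda}$; the structural argument above goes through in any characteristic once $q\neq 0$ has been secured from irreducibility.
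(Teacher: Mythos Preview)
Your argument is correct and follows the same overall architecture as the paper: the forward direction via triangularisability of $\Lambda$ when $p_{\Lambda}$ splits, and the reverse direction via the explicit construction $\mathbf{v}=q^{-1}(A-pI_n)\mathbf{u}$ from a nonzero $\mathbf{u}\in\ker p_{\Lambda}(A)$.

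The one substantive difference is in the linear-independence step. The paper assumes $\mathbf{v}=\lambda\mathbf{u}$, eliminates to obtain $q\lambda^{2}+(p-s)\lambda-r=0$, and then argues that the discriminant $(p-s)^{2}+4qr$ of this auxiliary quadratic coincides with the discriminant of $p_{\Lambda}(T)$, so its having a root would force $p_{\Lambda}$ to split; this is precisely where $\mathrm{char}\,\mathbb{F}\neq 2$ is invoked. Your route is shorter and characteristic-free: from $\mathbf{v}=\lambda\mathbf{u}$ you get $A\mathbf{u}=(p+q\lambda)\mathbf{u}$, and since $p_{\Lambda}(A)\mathbf{u}=0$ this forces $p_{\Lambda}(p+q\lambda)=0$, contradicting irreducibility directly. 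Your closing remark is therefore on the mark: the hypothesis $\mathrm{char}\,\mathbb{F}\neq 2$ is an artefact of the paper's discriminant comparison and is not needed for the theorem itself.
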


\begin{proof}
First assume that $\Lambda$ is a proper super-eigenvalue of $A$. We already saw that $\det(p_{\Lambda}(A))=0$. Suppose $p_{\Lambda}(T)$ is reducible. Then $\Lambda$ is similar to a triangular matrix, say $\left[ \begin{array}{cc}a&b\\ 0&c\end{array}\right]$. Let $W=\langle \mathbf{u}, \mathbf{v}\rangle$ be a super-eigenvector corresponding to $\Lambda$. Then
$$
A\mathbf{u}=p\mathbf{u}+q\mathbf{v}, \ A\mathbf{v}=r\mathbf{u}+s\mathbf{v},
$$
however, there are two other vectors $\mathbf{u}^{\prime}, \mathbf{v}^{\prime}\in W$ such that
$$
A\mathbf{u}^{\prime}=a\mathbf{u}^{\prime}+b\mathbf{v}^{\prime}, \ A\mathbf{v}^{\prime}=c\mathbf{v}^{\prime}.
$$
As a result, $\mathbf{v}^{\prime}$ is an eigenvector of $A$; a contradiction.

Now, suppose that $\det(p_{\Lambda}(A))=0$ and $p_{\Lambda}(T)$ is irreducible. We prove that there is a proper super-eigenvector of $A$ corresponding to $\Lambda$. As $p_{\Lambda}(A)$ is singular, there exists a non-zero $\mathbf{u}$ such that $p_{\Lambda}(A)\mathbf{u}=\mathbf{0}$ which means
$$
(A-pI_n)(A-sI_n)\mathbf{u}=qr\mathbf{u}.
$$
Note that $qr\neq 0$, because otherwise $p_{\Lambda}(T)=(T-p)(T-s)$ which is reducible. Let
$$
\mathbf{v}=q^{-1}(A-pI_n)\mathbf{u}.
$$
Then $A\mathbf{u}=p\mathbf{u}+q\mathbf{v}$ and
\begin{eqnarray*}
A\mathbf{v}&=&q^{-1}A(A-pI_n)\mathbf{u}\\
           &=&q^{-1}(A-sI_n+sI_n)(A-pI_n)\mathbf{u}\\
           &=&q^{-1}(qr\mathbf{u})+q^{-1}sq\mathbf{v}\\
           &=&r\mathbf{u}+s\mathbf{v}.
\end{eqnarray*}
In order to prove that $W=\langle \mathbf{u}, \mathbf{v}\rangle$ is a corresponding super-eigenvector of $\Lambda$, we must show that $\mathbf{u}$ and $\mathbf{v}$ form a  linearly independent set. Suppose on the contrary that $\mathbf{v}=\lambda \mathbf{u}$, for some scalar $\Lambda$. Then $A\mathbf{u}=p\mathbf{u}+q\mathbf{v}=(p+q\lambda )\mathbf{u}$, and similarly, $A\mathbf{v}=(p+q\lambda )\mathbf{v}$. Hence,
$$
r\mathbf{u}+s\mathbf{v}=(p+q\lambda )\mathbf{v},
$$
which implies that
$$
r+s\lambda=(p+q\lambda)\lambda.
$$
As a result, $q\lambda^2+(p-s)\lambda-r=0$ and this means that $\lambda$ is a root of the quadratic equation $qT^2-(p-s)T-r=0$. As the characteristic of the field is not $2$, the discriminant
$$
(p-s)^2+4qr
$$
is a prefect square. But this is the discriminant of $p_{\Lambda}(T)$ which violates the assumption that $p_{\Lambda}(T)$ is irreducible. This shows that $\mathbf{u}$ and $\mathbf{v}$ form a linearly independent set, and consequently, $W$ is a super-eigenvector corresponding to $\Lambda$. It remains to show that $W$ is proper. To this end, let $\mathbf{v}^{\prime}\in W$ be an eigenvector of $A$ and let $\mathbf{u}^{\prime}\in W$ be any vector such that $W=\langle \mathbf{u}^{\prime}, \mathbf{v}^{\prime}\rangle$. Then,
$$
A\mathbf{u}^{\prime}=a\mathbf{u}^{\prime}+b\mathbf{v}^{\prime}, \ A\mathbf{v}^{\prime}=c\mathbf{v}^{\prime}
$$
for some scalars $a, b,$ and $c$. Hence the matrix $\Lambda^{\prime}=\left[ \begin{array}{cc} a&b\\ 0&c\end{array}\right]$ is also a corresponding super-eigenvalue of $W$. Consequently, $\Lambda$ and $\Lambda^{\prime}$ are similar. This is impossible as the characteristic polynomial of $\Lambda^{\prime}$ is reducible.
\end{proof}

We close this section by listing a few consequences of the above theorem. Again, we assume that the field $\mathbb{F}$ is not of characteristic $2$.

\begin{corollary}
A matrix $\Lambda$ is a proper super-eigenvalue of $A$ if and only if its characteristic polynomial $p_{\Lambda}(T)$ is an irreducible factor of the characteristic polynomial of $A$.
\end{corollary}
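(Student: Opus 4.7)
The plan is to reduce this statement to the preceding theorem. That theorem already says that $\Lambda$ is a proper super-eigenvalue of $A$ if and only if $\det(p_{\Lambda}(A))=0$ and $p_{\Lambda}(T)$ is irreducible over $\mathbb{F}$. So, under the standing hypothesis that $p_{\Lambda}(T)$ is irreducible, what remains is to show that $\det(p_{\Lambda}(A))=0$ is equivalent to $p_{\Lambda}(T)$ dividing the characteristic polynomial $p_A(T)$ of $A$.

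To verify this equivalence I would pass to an algebraic closure $\overline{\mathbb{F}}$ of $\mathbb{F}$, list the eigenvalues of $A$ there as $\lambda_1,\dots,\lambda_n$ (with multiplicity), and write the roots of $p_{\Lambda}(T)$ as $\alpha,\beta\in\overline{\mathbb{F}}$. Triangularising $A$ over $\overline{\mathbb{F}}$ yields the standard identity
$$
\det(p_{\Lambda}(A))=\prod_{i=1}^{n}p_{\Lambda}(\lambda_i)=\prod_{i=1}^{n}(\lambda_i-\alpha)(\lambda_i-\beta),
$$
so that $\det(p_{\Lambda}(A))=0$ precisely when one of $\alpha,\beta$ coincides with some $\lambda_i$, that is, when $\alpha$ or $\beta$ is a root of $p_A(T)$.

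Now I would invoke irreducibility in the usual way. If $\alpha$ is a root of $p_A(T)$, then $p_{\Lambda}(T)$, being monic irreducible over $\mathbb{F}$ with $\alpha$ as a root, is the minimal polynomial of $\alpha$ over $\mathbb{F}$, and therefore divides $p_A(T)$; the case of $\beta$ is symmetric, since $\alpha$ and $\beta$ are Galois conjugates. Conversely, if $p_{\Lambda}(T)\mid p_A(T)$, then $\alpha$ is automatically among the $\lambda_i$ and the product above vanishes. Combining the two directions with the theorem produces the claimed biconditional. I do not expect any serious obstacle here: the content of the corollary is really that the characterisation obtained in the preceding theorem is nothing other than the statement ``$p_{\Lambda}(T)$ is an irreducible factor of $p_A(T)$'', once the identity $\det(p(A))=\prod p(\lambda_i)$ is in hand.
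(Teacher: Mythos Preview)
Your argument is correct. You reduce both implications to the preceding theorem via the identity $\det(p_{\Lambda}(A))=\prod_{i=1}^{n}p_{\Lambda}(\lambda_i)$ over an algebraic closure, together with the fact that an irreducible polynomial with a common root with $p_A$ must divide $p_A$. The paper takes a different, more hands-on route. For the forward direction it extends a basis $\{\mathbf{u},\mathbf{v}\}$ of a corresponding super-eigenvector $W$ to a basis of $\mathbb{F}^n$, so that $A$ becomes block upper-triangular with $\Lambda^{t}$ in the top corner, giving $p_{\Lambda}\mid p_A$ directly over $\mathbb{F}$. For the converse it uses Cayley--Hamilton to see that $p_{\Lambda}(A)$ is singular, then explicitly rebuilds the invariant plane $W=\langle\mathbf{u},A\mathbf{u}+\lambda\mathbf{u}\rangle$ before invoking the theorem. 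Your approach is more uniform---one mechanism handles both directions---and avoids any explicit construction; the paper's approach never leaves the ground field and, in the converse, actually exhibits the super-eigenvector, which is useful if one wants to compute it rather than merely assert its existence.
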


\begin{proof}
Let $\Lambda$ be a super-eigenvalue of $A$ corresponding to $W=\langle \mathbf{u}, \mathbf{v}\rangle$. Extending the set $\{ \mathbf{u}, \mathbf{v}\}$ to a basis of $\mathbf{F}^n$, we see that $A$ is similar to a matrix of the form
$$
\left[ \begin{array}{cc}
         \Lambda^t&B\\
         \mathbf{0}&C \end{array}\right],
$$
and hence $p_{\Lambda}(T)$ is a factor of $p_A(T)$, the characteristic polynomial of $A$. Conversely, suppose
$$
p_A(T)=(T^2+\lambda T+\mu)q(T),
$$
where the quadratic polynomial $T^2+\lambda T+\mu$ is irreducible. As $p_A(A)=\mathbf{0}$, the matrix  $A^2+\lambda A+\mu I_n$ is singular, and hence, there is a non-zero vector $\mathbf{u}$ such that $(A^2+\lambda A+\mu I_n)\mathbf{u}=\mathbf{0}$. Let $\mathbf{v}=A\mathbf{u}+\lambda \mathbf{u}$. Then we have
$$
A\mathbf{u}=-\lambda \mathbf{u}+\mathbf{v}, \ A\mathbf{v}=-\mu \mathbf{u}.
$$
We show that $W=\langle \mathbf{u}, \mathbf{v}\rangle$ is a  proper super-eigenvector of $A$ corresponding to the matrix $\Lambda=\left[ \begin{array}{rr} -\lambda& 1\\ -\mu& 0\end{array}\right]$. Note that $\dim(W)=2$, otherwise we must have $\mathbf{v}=\alpha \mathbf{u}$, for some scalar $\alpha$. But, then $A\mathbf{u}=(\alpha-\lambda)\mathbf{u}$ which implies that
$$
(\alpha-\lambda)^2+\lambda(\alpha-\lambda)+\mu=0
$$
which means that the polynomial $T^2+\lambda T+\mu$ is reducible; a contradiction. Now, $p_{\Lambda}(T)=T^2+\lambda T+\mu$ which is irreducible, so, using the previous theorem, $\Lambda$ is a proper super-eigenvalue of $A$.
\end{proof}

Consequently, in order to determine the (proper) eigenvalues and $R$-endomorphism over a free $R$-module, we must first determine the corresponding matrix $A$. Then, using the above result, we can find all proper super-eigenvalues of $A$.

\begin{corollary}
Let $R=\Mat$ be the ring of all $2\times 2$ matrices over a filed $\mathbb{F}$ whose characteristic is not $2$. Let $f:R^k\to R^k$ be an arbitrary $R$-homomorphism. Suppose $A$ is the corresponding matrix of $f$ and consider the set $\{ p_1(T), \ldots, p_m(T)\}$ of all quadratic irreducible factors of the characteristic polynomial $p_A(T)$. If $p_i(T)=T^2+\lambda_i T+\mu_i$, for all $i$, then the matrices
$$
\Lambda_i=\left[ \begin{array}{rr} -\lambda_i& 1\\ -\mu_i& 0\end{array}\right], \ (1\leq i\leq m)
$$
are all pairwise non-similar proper eigenvalues of $f$.
\end{corollary}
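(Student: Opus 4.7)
The plan is to reduce the claim directly to the previous corollary via the Section~2 dictionary between $R$-endomorphisms of $R^k$ and matrices over $\mathbb{F}$. By the second lemma of Section~2, $f = f_A$ for some $A \in \mathrm{Mat}_n(\mathbb{F})$ with $n = 2k$, and by the first proposition of Section~3, a matrix $\Lambda \in R$ is an eigenvalue of $f$ (equivalently, of $\tilde A$) if and only if $\Lambda$ is a super-eigenvalue of $A$. Properness is preserved under this correspondence, since both notions refer to a $2$-dimensional invariant subspace containing no ordinary eigenvector.

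Next, I would compute $p_{\Lambda_i}(T)$ by a direct $2\times 2$ determinant. Since $\mathrm{tr}(\Lambda_i) = -\lambda_i$ and $\det(\Lambda_i) = \mu_i$, we obtain
$$
p_{\Lambda_i}(T) = T^2 - \mathrm{tr}(\Lambda_i) T + \det(\Lambda_i) = T^2 + \lambda_i T + \mu_i = p_i(T).
$$
By hypothesis $p_i(T)$ is an irreducible quadratic factor of $p_A(T)$, so the previous corollary applies and yields that each $\Lambda_i$ is a proper super-eigenvalue of $A$, i.e., a proper eigenvalue of $f$.

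For pairwise non-similarity, the polynomials $p_1(T),\ldots,p_m(T)$ are pairwise distinct, being listed as the elements of the set of irreducible quadratic factors of $p_A(T)$. Since similar matrices share the same characteristic polynomial, if $\Lambda_i$ and $\Lambda_j$ were similar then $p_i(T) = p_{\Lambda_i}(T) = p_{\Lambda_j}(T) = p_j(T)$, forcing $i = j$.

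The whole argument is essentially bookkeeping once the preceding corollary is in hand. The only point worth being careful about is matching the two notions of ``proper'' on either side of the correspondence $\Lambda \leftrightarrow W$, but this is immediate from unwinding the definition of $f_A$ and the isomorphism $R^k \cong_R \mathcal{U}$ from the first lemma of Section~2, so no genuine obstacle arises.
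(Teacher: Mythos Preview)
Your argument is correct and is exactly the route the paper has in mind: the corollary is stated without proof, being an immediate consequence of the preceding corollary together with the Section~2 dictionary, and your write-up makes that deduction explicit.

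One remark on scope: the phrase ``are all pairwise non-similar proper eigenvalues of $f$'' is most naturally read as asserting completeness, i.e., that \emph{every} proper eigenvalue of $f$ is similar to some $\Lambda_i$. Your proof establishes that each $\Lambda_i$ is a proper eigenvalue and that they are pairwise non-similar, but does not close the loop in the other direction. This is a one-line addition: if $\Lambda$ is any proper eigenvalue of $f$, then by the ``only if'' direction of the previous corollary $p_\Lambda(T)$ is an irreducible quadratic factor of $p_A(T)$, hence $p_\Lambda(T)=p_i(T)$ for some $i$; since a $2\times 2$ matrix with irreducible characteristic polynomial is similar to its companion matrix, $\Lambda$ is similar to $\Lambda_i$. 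With that sentence added, the proof is complete under either reading of the statement.
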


The number of pairwise non-similar proper eigenvalues of an $R$ homomorphism $f:R^k\to R^k$ is at most $k$. This can be proved as follows. Let $A$ be the corresponding matrix of $f$ and suppose that for each $1\leq i\leq m$, $W_i=\langle \mathbf{u}_i, \mathbf{v}_i\rangle$ is a proper super-eigenvector of $A$ with a corresponding super-eigenvalue $\Lambda_i$. Then we show that the sum $\sum_{i=1}^mW_i$ is direct. As we know, each characteristic polynomial $p_{\Lambda_i}(T)$ is an irreducible factor of $p_A(T)$. So, we have
$$
p_A(T)=p_1(T)^{\alpha_1}\cdots p_m(T)^{\alpha_m}\times \mathrm{other}\ \mathrm{factors},
$$
for some positive integers $\alpha_1, \ldots, \alpha_m$. Let $V_i=\ker(p_i(A)^{\alpha_i})$. By the primary decomposition theorem, the sum $\sum_{i=1}^mV_i$ is direct. On the other hand, a direct check shows that
$$
p_i(A)\mathbf{u}_i=p_i(A)\mathbf{v}_i=\mathbf{0},
$$
so $W_i\subseteq V_i$. This shows that the sum $\sum_{i=1}^mW_i$ is direct as well, and consequently, $A$ has only finitely many pairwise non-similar proper super-eigenvalue.

\section{An Example}
Suppose $\mathbb{F}=\mathbb{R}$ is the field of real numbers. Let $R=\mathrm{Mat}_2(\mathbb{R})$ and $\mathcal{U}=\mathbb{R}^4\oplus \mathbb{R}^4$. The map
$$
(x_1, x_2, x_3, x_4, y_1, y_2, y_3, y_4)\mapsto (-x_4, x_1, x_2, x_3, -y_4, y_1, y_2, y_3)
$$
is an $R$-homomorphism. The corresponding matrix is
$$
A=\left[ \begin{array}{cccc}0&0&0&-1\\
                            1&0&0&0\\
                            0&1&0&0\\
                            0&0&1&0
                            \end{array}\right].
$$
The characteristic polynomial of $A$ is $T^4+1$ which has no root, however, we show that $A$ has two (non-similar) proper super-eigenvalues
$$
\left[ \begin{array}{cc} 0&-1\\ 1&\sqrt{2}\end{array}\right], \ \left[ \begin{array}{cc} 0&-1\\ 1&-\sqrt{2}\end{array}\right].
$$
We have
$$
(A-pI_4)(A-sI_4)-qrI_4=\left[ \begin{array}{cccc}
                               d&0&-1&t\\
                               -t&d&0&-1\\
                               1&-t&d&0\\
                               0&1&-t&d
                                  \end{array}\right],
$$
where $d=ps-qr$ and $t=p+s$. The determinant of the above matrix is $t^4-4dt^2+(d^2+1)^2$. Hence, in order to find the super-eigenvalues, we must solve the equation $t^4-4dt^2+(d^2+1)^2=0$. This is a quadratic equation in terms of $t^2$ and its discriminant is
$$
16d^2-4(d^2+1)^2=-4(d^2-1)^2.
$$
We must have $-4(d^2-1)^2\geq 0$, so the only possibilities are $d=\pm 1$. The case $d=-1$ gives rise to the polynomial $t^4+4t^2+4=0$ which has no root. So, $d=1$, and consequently, $t=\pm\sqrt{2}$. This means that for any proper super-eigenvalue $\Lambda$, we have $\det(\Lambda)=1$ and $\mathrm{tr}(\Lambda)=\pm \sqrt{2}$. As such, the characteristic polynomial is $p_{\Lambda}(T)=T^2\pm \sqrt{2}T+1$ which is indeed irreducible. Therefore, $\Lambda$ has the same characteristic  and minimal polynomial as one of the matrices
$$
\left[ \begin{array}{cc} 0&-1\\ 1&\sqrt{2}\end{array}\right], \ \left[ \begin{array}{cc} 0&-1\\ 1&-\sqrt{2}\end{array}\right].
$$
This means that $\Lambda$ is similar to one of the above matrices. Now, we determine, the corresponding super-eigenvectors. First consider
$\Lambda=\left[ \begin{array}{cc} 0&-1\\ 1&\sqrt{2}\end{array}\right]$. We must solve the system
$$
\left[ \begin{array}{c}A\mathbf{u}\\ A\mathbf{v}\end{array}\right]=\Lambda\cdot \left[ \begin{array}{c}\mathbf{u}\\ \mathbf{v}\end{array}\right],
$$
where $\mathbf{u}$ and $\mathbf{v}$ are the vectors
$$
\mathbf{u}=\left[ \begin{array}{c}x_1\\ x_2\\ x_3\\ x_4 \end{array}\right], \ \mathbf{v}=\left[ \begin{array}{c}y_1\\ y_2\\ y_3\\ y_4 \end{array}\right].
$$
This gives rise to the linear equations
$$
\begin{array}{rrrrrrrrrr}
\ &\ &\ &x_4&-y_1&\ &\ &\ &=&0\\
-x_1&\ &\ &\ &\ &-y_2&\ &\ &=&0\\
\ &-x_2&\ &\ &\ &\ &-y_3&\ &=&0\\
\ &\ &-x_3&\ &\ &\ &\ &-y_4&=&0\\
x_1&\ &\ &\ &+\sqrt{2}y_1&\ &\ &+y_4&=&0\\
\ &x_2&\ &\ &-y_1&+\sqrt{2}y_2&\ &\ &=&0\\
\ &\ &x_3&\ &\ &-y_2&+\sqrt{2}y_3&\ &=&0\\
\ &\ &\ &x_4&\ &\ &-y_3&+\sqrt{2}y_4&=&0.
\end{array}
$$
Therefore, we have
$$
\mathbf{u}=\left[ \begin{array}{c}0\\ 1\\ \sqrt{2}\\ 1 \end{array}\right], \ \mathbf{v}=\left[ \begin{array}{c}1\\ 0\\ -1\\ -\sqrt{2} \end{array}\right],
$$
and consequently, the corresponding super-eigenvector is
$$
W=\langle \left[ \begin{array}{c}0\\ 1\\ \sqrt{2}\\ 1 \end{array}\right], \ \left[ \begin{array}{c}1\\ 0\\ -1\\ -\sqrt{2} \end{array}\right]\rangle.
$$
Similarly, the corresponding super-eigenvector of $\Lambda=\left[ \begin{array}{cc} 0&-1\\ 1&-\sqrt{2}\end{array}\right]$ is
$$
W=\langle \left[ \begin{array}{c}0\\ 1\\ -\sqrt{2}\\ 1 \end{array}\right], \ \left[ \begin{array}{c}1\\ 0\\ -1\\ \sqrt{2} \end{array}\right]\rangle.
$$
As a result, with respect to the basis
$$
\left[\begin{array}{c}0\\ 1\\ \sqrt{2}\\ 1 \end{array}\right], \ \left[ \begin{array}{c}1\\ 0\\ -1\\ -\sqrt{2} \end{array}\right], \ \left[ \begin{array}{c}0\\ 1\\ -\sqrt{2}\\ 1 \end{array}\right], \ \left[ \begin{array}{c}1\\ 0\\ -1\\ \sqrt{2} \end{array}\right]
$$
of $\mathbb{R}^4$, the matrix of the above map will have the block form
$$
\left[ \begin{array}{rrrr}
 0&-1&0&0\\
 1&\sqrt{2}&0&0\\
 0&0&0&-1\\
 0&0&1&-\sqrt{2}
 \end{array} \right].
$$

\normalsize

\end{document}